\renewcommand{\le}{\leqslant}
\renewcommand{\ge}{\geqslant}
\renewcommand{\setminus}{\smallsetminus}
\renewcommand{\gamma}{\upgamma}
\newcommand{\n}{\{1,\ldots,n\}}
\newcommand{\f}{\varphi}
\newcommand{\e}{\varepsilon}
\newcommand{\R}{\mathbb R}
\newcommand{\1}{\mathbf 1}
\newtheorem{theorem}{Theorem}[section]
\newtheorem{proposition}[theorem]{Proposition}
\newtheorem{lemma}[theorem]{Lemma}
\theoremstyle{remark}
\newtheorem{remark}[theorem]{Remark}
\newtheorem{question}[theorem]{Question}
\theoremstyle{definition}
\newtheorem{definition}[theorem]{Definition}
\renewcommand{\subset}{\subseteq}
\newcommand{\E}{\mathbb{ E}}
\newcommand{\F}{\mathbb F}
\newcommand{\N}{\mathbb N}
\newcommand{\eqdef}{\stackrel{\mathrm{def}}{=}}
\begin{document}

\title[Pythagorean powers of hypercubes]{Pythagorean powers of hypercubes}

\author{Assaf Naor}
\address{Mathematics Department\\ Princeton University\\ Fine Hall, Washington Road, Princeton, NJ 08544-1000, USA}
\email{naor@math.princeton.edu}
\thanks{A.~N. was supported by the NSF, the BSF, the Packard Foundation and the Simons Foundation.}

\author{Gideon Schechtman}
\address {Department of Mathematics\\
Weizmann Institute of Science\\
Rehovot 76100, Israel}
\email{gideon@weizmann.ac.il}.
\thanks{G.~S. was supported by the ISF and the BSF}
\date{}

\begin{abstract} For $n\in \N$ consider  the $n$-dimensional hypercube as equal to  the vector space $\F_2^n$, where $\F_2$ is the field of size two. Endow $\F_2^n$ with the Hamming metric, i.e., with the metric induced by the $\ell_1^n$ norm when one identifies $\F_2^n$ with $\{0,1\}^n\subset \R^n$. Denote by $\ell_2^n(\F_2^n)$ the $n$-fold Pythagorean product of $\F_2^n$, i.e., the space of all $x=(x_1,\ldots,x_n)\in \prod_{j=1}^n \F_2^n$, equipped with the metric
$$
\forall\, x,y\in \prod_{j=1}^n \F_2^n,\qquad d_{\ell_2^n(\F_2^n)}(x,y)\eqdef \sqrt{ \|x_1-y_1\|_1^2+\ldots+\|x_n-y_n\|_1^2}.
$$
It is shown here that the  bi-Lipschitz distortion  of any embedding of $\ell_2^n(\F_2^n)$ into $L_1$ is at least a constant multiple of $\sqrt{n}$. This is achieved through the following new bi-Lipschitz invariant, which is a metric version of (a slight variant of) a linear inequality of Kwapie{\'n} and Sch\"utt (1989).  Letting $\{e_{jk}\}_{j,k\in \n}$ denote the standard basis of the space of all $n$ by $n$ matrices $M_n(\F_2)$, say that a metric space $(X,d_X)$ is a  KS space if there exists $C=C(X)>0$ such that for every $n\in 2\N$, every mapping $f:M_n(\F_2)\to X$ satisfies
\begin{equation*}\label{eq:metric KS abstract}
\frac{1}{n}\sum_{j=1}^n\E\left[d_X\Big(f\Big(x+\sum_{k=1}^ne_{jk}\Big),f(x)\Big)\right]\le C \E\left[d_X\Big(f\Big(x+\sum_{j=1}^ne_{jk_j}\Big),f(x)\Big)\right],
\end{equation*}
where the expectations above are with respect to $x\in M_n(\F_2)$ and $k=(k_1,\ldots,k_n)\in \n^n$ chosen uniformly at random. It is shown here that $L_1$ is a KS space (with $C= 2e^2/(e^2-1)$, which is best possible), implying the above nonembeddability statement. Links to the Ribe program are discussed, as well as related open problems.
\end{abstract}

\maketitle

\section{Introduction}

For a metric space $(X,d_X)$ and $n\in \N$, the $n$-fold Pythagorean power of $(X,d_X)$, denoted $\ell_2^n(X)$, is the space $X^n$, equipped with metric given by setting for every $(x_1,\ldots,x_n),(y_1,\ldots,y_n)\in X$,
\begin{equation}\label{eq:def pythagorean product}
 d_{\ell_2^n(X)}\big((x_1,\ldots,x_n),(y_1,\ldots,y_n)\big)\eqdef \sqrt{d_X(x_1,y_1)^2+\ldots+d_X(x_n,y_n)^2}.
\end{equation}
For $p\in [1,\infty]$, one analogously defines the $\ell_p$ powers of $(X,d_X)$ by replacing in the right hand side of~\eqref{eq:def pythagorean product} the squares by $p$'th powers and the square root by the $p$'th root (with the obvious modification for $p=\infty$). When $(X,\|\cdot\|_X)$ is a Banach space and $p\in [1,\infty]$, one also commonly considers the Banach space $\ell_p(X)$ consisting of all the infinite sequences $x=(x_1,x_2,\ldots)\in X^{\aleph_0}$ such that $\|x\|_{\ell_p(X)}^p= \sum_{j=1}^\infty\|x_j\|_X^p<\infty$. One could give a similar definition of infinite $\ell_p$ powers for {\em pointed} metric spaces, but in the present article it will suffice to only consider $n$-fold  powers of metric spaces for finite $n\in \N$.

Throughout the ensuing discussion we shall use standard notation and terminology from Banach space theory, as in~\cite{LT77}. In particular, for $p\in [1,\infty]$ and $n\in \N$, we use the notations $\ell_p=\ell_p(\R)$ and $\ell_p^n=\ell_p^n(\R)$, and  the space $L_p$ refers to the Lebesgue function space $L_p(0,1)$. We shall also use standard notation and terminology from the theory of metric embeddings, as in~\cite{Mat02,Ost13}. In particular, a metric space $(X,d_X)$ is said to admit a bi-Lipschitz embedding into a metric space $(Y,d_Y)$ if there exists $s\in (0,\infty)$, $D\in [1,\infty)$ and a mapping $f:X\to Y$ such that
\begin{equation}\label{eq:distortion definition}
\forall\, x,y\in X,\qquad sd_X(x,y)\le d_Y(f(x),f(y))\le Dsd_X(x,y)
\end{equation}
When this happens we say that $(X,d_X)$ embeds into $(Y,d_Y)$ with distortion at most $D$.  We denote by $c_{(Y,d_Y)}(X,d_X)$ (or simply $c_Y(X), c_Y(X,d_X)$ if the metrics are clear from the context) the infimum over those $D\in [1,\infty]$ for which $(X,d_X)$ embeds into $(Y,d_Y)$ with distortion at most $D$.  When $Y=L_p$ we use the shorter notation $c_{L_p}(X,d_X)=c_p(X,d_X)$.

A folklore theorem asserts  that $\ell_2(\ell_1)$ is not isomorphic to a subspace of $L_1$. While this statement follows from a (nontrivial) gliding hump argument, we could not locate a reference to where it was first discovered; different proofs of certain stronger statements can be found in~\cite[Theorem~4.2]{Kal85}, \cite{Ray86} and~\cite[Section~3]{RS88}. More generally, $\ell_q(\ell_p)$   is not isomorphic to a subspace of $L_1$ whenever $q>p\ge 1$; the present work yields new information on this stronger statement as well, but for the sake of simplicity we shall focus for the time being only on the case of Pythagorean products.

Finite dimensional versions of the above results were discovered by Kwapie{\'n} and Sch\"utt, who proved in~\cite{KS89} that for every $n\in \N$, if $T:\ell_2^n(\ell_1^n)\to L_1$ is an injective linear mapping then necessarily $\|T\|\cdot\|T^{-1}\|\gtrsim \sqrt{n}$. Here, and in what follows, we use the convention that for $a,b\in [0,\infty)$ the notation $a\gtrsim b$ (respectively $a\lesssim b$) stands for $a\ge c b$ (respectively $a\le cb$) for some universal constant $c\in (0,\infty)$.  Below, the notation $a\asymp b$ stands for $(a\lesssim b)\wedge (b\lesssim a)$. By Cauchy--Schwarz, the identity mapping $Id:\ell_2^n(\ell_1^n)\to \ell_1^n(\ell_1^n)$ satisfies $\|Id\|\cdot\|Id^{-1}\|=\sqrt{n}$. So, the above lower bound of Kwapie{\'n} and Sch\"utt is asymptotically sharp as $n\to \infty$, up to the implicit universal constant.

By general principles, the above stated result of Kwapie{\'n} and Sch\"utt formally implies that
\begin{equation}\label{eq:L1 dist explodes}
\lim_{n\to \infty} c_1(\ell_2^n(\F_2^n))=\infty,
\end{equation}
where $\F_2^n$ is the $n$-dimensional discrete hypercube, endowed with the metric inherited from $\ell_1^n$ via the identification $\F_2^n=\{0,1\}^n\subset \R^n$.   The deduction of~\eqref{eq:L1 dist explodes} is as follows. Suppose for contradiction that $\sup_{n\in \N}c_1(\ell_2^n(\F_2^n))<\infty$. Since for every $\e>0$ every finite subset of $\ell_1$ embeds with distortion $1+\e$ into $\F_2^m$ for some $m\in \N$ (see~\cite{DL97}), it follows from our contrapositive assumption that there exists $K\in [1,\infty)$ such that for every finite subset $X\subset \ell_1$ and every $n\in \N$ we have $c_1(\ell_2^n(X))\le K$. By a standard ultrapower argument (as in~\cite{Hei80}) this implies that $\sup_{n\in \N}c_1(\ell_2^n(\ell_1^n))\le K$. Next, by using a $w^*$-G\^ateaux differentiation argument combined with the fact that $L_1^{**}$ is an $L_1(\mu)$ space (see~\cite{HM82} or~\cite[Chapter~7]{BL00})) it follows that there exists a linear operator $T:\ell_2^n(\ell_1^n)\to L_1$ with $\|T\|\cdot\|T^{-1}\|\lesssim 2K$, contradicting the lower bound of Kwapie{\'n} and Sch\"utt. This proof of~\eqref{eq:L1 dist explodes} does not yield information on the rate at which $c_1(\ell_2^n(\F_2^n))$ tends to $\infty$, a problem that we resolve here.

\begin{theorem}\label{thm:pythagorean computation intro}
We have $c_1(\ell_2^n(\F_2^n))\asymp \sqrt{n}$.
\end{theorem}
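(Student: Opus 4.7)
The plan for the upper bound $c_1(\ell_2^n(\F_2^n))\lesssim \sqrt n$ is to compose the isometric inclusion $\ell_2^n(\F_2^n)\hookrightarrow \ell_2^n(\ell_1^n)$ with the formal identity $\ell_2^n(\ell_1^n)\to \ell_1^n(\ell_1^n)=\ell_1^{n^2}$, which has distortion $\sqrt n$ by Cauchy--Schwarz, and with the isometric inclusion $\ell_1^{n^2}\subset L_1$. For the matching lower bound, the plan is to prove that $L_1$ is a KS space with constant $C=2e^2/(e^2-1)$ and apply this property to any $D$-bi-Lipschitz embedding $f:\ell_2^n(\F_2^n)\to L_1$. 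Identifying $\ell_2^n(\F_2^n)$ with $M_n(\F_2)$ for even $n$ (odd $n$ being handled by restriction to an $(n-1)$-dimensional subcube), the row-flip $\sum_{k=1}^n e_{jk}$ has $\ell_2^n(\F_2^n)$-norm $\sqrt{n^2}=n$ while the one-per-row flip $\sum_{j=1}^n e_{jk_j}$ has norm $\sqrt n$; scaling $f$ so that $d_{\ell_2^n(\F_2^n)}(x,y)\le \|f(x)-f(y)\|_1\le D\,d_{\ell_2^n(\F_2^n)}(x,y)$, the KS inequality for $f$ forces $n\le CD\sqrt n$, i.e., $D\gtrsim \sqrt n$.

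To prove the KS property for $L_1$, I would first reduce to $\{0,1\}$-valued functions via Fubini and the layer-cake identity $|a-b|=\int_\R|\1_{a>t}-\1_{b>t}|\,dt$: this expresses each $\|f(x)-f(y)\|_1$ as a double integral over $(\omega,t)$ of $|h_{\omega,t}(x)-h_{\omega,t}(y)|$ for $h_{\omega,t}(x)=\1_{f(x)(\omega)>t}$, so it suffices to prove the inequality for every indicator $h=\1_A:M_n(\F_2)\to\{0,1\}$. For such $h$, the identity $|h(x)-h(y)|=(h(x)-h(y))^2$ enables Walsh--Fourier analysis on $\F_2^{n^2}$: writing $\1_A=\sum_{S\subset \n\times \n}\widehat{\1_A}(S)\chi_S$ and invoking Parseval together with $(\chi_S(y)-1)^2=2(1-\chi_S(y))$ expresses both sides of the KS inequality as $\sum_S\widehat{\1_A}(S)^2$ times non-negative weights, reducing the claim to the per-character scalar estimate
\begin{equation*}
\frac{2r(S)}{n}\le C\Bigl(1-\prod_{j=1}^n\bigl(1-\tfrac{2s_j(S)}{n}\bigr)\Bigr),
\end{equation*}
where $s_j(S)=|S\cap(\{j\}\times \n)|$ and $r(S)=|\{j\in \n:s_j(S)\text{ odd}\}|$.

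The main obstacle is to verify this per-character inequality with the sharp constant $C=2e^2/(e^2-1)$. The parity assumption $n\in 2\N$ enters crucially: it guarantees $|1-2s/n|\le 1-2/n$ for every odd $s\in\{1,\ldots,n-1\}$, so $|\prod_j(1-2s_j/n)|\le (1-2/n)^r\le e^{-2r/n}$. The estimate then reduces to the elementary inequality $e^{-v}\le 1-v/C$ for $v\in[0,2]$, which follows from the monotonicity of $v\mapsto v/(1-e^{-v})$ on $(0,2]$ and its value $C$ at the endpoint $v=2$; applying it with $v=2r/n$ completes the argument. The sharpness of $C$ is forced by the extremal configuration $s_j\equiv 1$ (so $r=n$), where $(1-2/n)^n\uparrow e^{-2}$ and the bound is saturated asymptotically; any weaker constant would still deliver the theorem up to a multiplicative factor, but matching the optimal $C$ stated in the abstract demands both the parity hypothesis and the precise calculus lemma.
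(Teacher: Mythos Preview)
Your proposal is correct and follows essentially the same approach as the paper: the upper bound via the formal identity $\ell_2^n(\ell_1^n)\to\ell_1^{n^2}$ is exactly the paper's argument, and the lower bound via the metric KS inequality applied to a bi-Lipschitz $f$, with the row-flip versus one-per-row-flip comparison, reproduces the paper's deduction in Section~1.2. The only cosmetic difference is in the reduction that precedes the Fourier computation: the paper passes from $L_1$ to $L_2$ via Schoenberg's isometric embedding of $(L_1,\sqrt{\|x-y\|_1})$ into $L_2$ and then argues pointwise for real-valued $f$, whereas you reduce to $\{0,1\}$-valued functions via the cut-cone/layer-cake decomposition and use $|a-b|=(a-b)^2$ for $a,b\in\{0,1\}$; the paper itself notes this equivalence in Remark~2.2, and from that point on the per-character estimate and its verification (parity of $n$ forcing $|1-2s/n|\le 1-2/n$ for odd $s$, then the monotonicity of $v\mapsto v/(1-e^{-v})$) are identical in spirit to the paper's argument in~\eqref{eq:use n even}--\eqref{eq:to contrast n even}.
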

Note that if we write $Y\eqdef \ell_2^n(\F_2^n)$ then $|Y|=2^{n^2}$, and therefore by Theorem~\ref{thm:pythagorean computation intro} we have
\begin{equation}\label{eq:in terms of cardinality}
c_1(Y)\asymp \sqrt[4]{\log |Y|}.
\end{equation}
In light of~\eqref{eq:in terms of cardinality}, the following interesting open question asks whether or not $\ell_2^n(\F_2^n)$ is the finite subset of $\ell_2(\ell_1)$ that is asymptotically the furthest from a subset of $L_1$ in terms of its cardinality.
\begin{question}\label{Q:ell2ell1 aln}
{\em Suppose that $S\subset \ell_2(\ell_1)$ is finite. Is it true that $c_1(S)\lesssim \sqrt[4]{\log |S|}$?}
\end{question}

The following lemma (proved in Section~\ref{sec:embeddings} below) is a simple consequence of~\cite{ALN08}. It shows that the answer to Question~\ref{Q:ell2ell1 aln} is positive (up to lower order factors) for finite subsets $S\subset \ell_2(\ell_1)$ that are product sets, i.e., those sets of the form $S=X_1\times \ldots\times X_n\subset \ell_2^n(\ell_1)$ for some $n\in \N$ and finite subsets $X_1,\ldots,X_n\subset \ell_1$. This assertion is of course very far from a full resolution of Question~\ref{Q:ell2ell1 aln}. We conjecture that the answer to Question~\ref{Q:ell2ell1 aln} is positive, and it would be worthwhile to investigate whether or not variants of the methods used in the proof of the main result of~\cite{ALN08} are relevant here.

\begin{lemma}\label{lem:use aln} Suppose that $n\in \N$ and $X_1,\ldots,X_n\subseteq \ell_1$ are finite. Write $S=X_1\times \ldots\times X_n\subset \ell_2^n(\ell_1)$. Then
$$
c_1(S)\lesssim \sqrt[4]{\log|S|}\cdot \sqrt{\log\log |S|}=\left(\log |S|\right)^{\frac14+o(1)}.
$$
\end{lemma}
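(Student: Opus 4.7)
The plan is to trade off a factor-by-factor application of the main Hilbertian embedding theorem of \cite{ALN08} against the trivial identity embedding $\ell_2^n(\ell_1)\hookrightarrow\ell_1^n(\ell_1)$, using a size threshold to decide which factors receive which treatment. Fix a parameter $\tau\ge 2$ (to be optimized at the end) and split $\n=A\sqcup B$ with $A=\{i:|X_i|\le\tau\}$. Writing $S=X_A\times X_B$ for $X_A=\prod_{i\in A}X_i$ and $X_B=\prod_{i\in B}X_i$, each equipped with the Pythagorean metrics $d_A,d_B$ induced from $\ell_2^{|A|}(\ell_1)$ and $\ell_2^{|B|}(\ell_1)$, we have the clean decomposition $d_{\ell_2^n(\ell_1)}(x,y)^2=d_A(x_A,y_A)^2+d_B(x_B,y_B)^2$.

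For each $i\in A$, apply \cite{ALN08} to $X_i\subset\ell_1$ to produce a $1$-Lipschitz map $\phi_i:X_i\to L_2$ with $\|\phi_i(x)-\phi_i(y)\|_2\ge\|x-y\|_1/D_\tau$, where $D_\tau\eqdef O(\sqrt{\log\tau}\,\log\log\tau)$. Stability of Hilbert space under Pythagorean products gives $\Phi_A\eqdef\bigoplus_{i\in A}\phi_i:X_A\to L_2$, a $1$-Lipschitz map with distortion at most $D_\tau$ with respect to $d_A$. For the large factors, take $\Phi_B:X_B\to\ell_1$ to be the obvious concatenation of the inclusions $X_i\hookrightarrow\ell_1$; by $\|v\|_2\le\|v\|_1\le\sqrt{|B|}\,\|v\|_2$ on $\R^{|B|}$, its distortion with respect to $d_B$ is at most $\sqrt{|B|}$.

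To combine the pieces, embed $L_2$ into $L_1$ (isometrically up to a universal Gaussian constant), place the two resulting $L_1$-pieces on disjoint coordinates, and set $\Phi(x)\eqdef(\lambda\Phi_A(x_A),\Phi_B(x_B))\in L_1$ with $\lambda\asymp D_\tau$ chosen to match the lower Lipschitz constants of the two blocks. A one-line Cauchy--Schwarz in $\R^2$ applied to the vectors $(D_\tau,\sqrt{|B|})$ and $(d_A,d_B)$ then yields
\[
d_{\ell_2^n(\ell_1)}(x,y)\;\lesssim\;\|\Phi(x)-\Phi(y)\|_1\;\lesssim\;\sqrt{D_\tau^2+|B|}\cdot d_{\ell_2^n(\ell_1)}(x,y),
\]
so $c_1(S)\lesssim\sqrt{D_\tau^2+|B|}$. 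The pigeonhole-type bound $|S|\ge\prod_{i\in B}|X_i|\ge\tau^{|B|}$ gives $|B|\le\log|S|/\log\tau$, reducing the problem to the scalar minimization of $\log\tau(\log\log\tau)^2+\log|S|/\log\tau$, optimized at $\log\tau\asymp\sqrt{\log|S|}/\log\log|S|$ to yield the target bound $c_1(S)\lesssim(\log|S|)^{1/4}\sqrt{\log\log|S|}$. The only substantive input is the black-box distortion bound of \cite{ALN08}; the Pythagorean decomposition, the choice of $\lambda$, and the scalar optimization are all routine, and the main point is simply to notice that splitting the factors by size lets one amortize the ALN distortion (which is sensitive to $|X_i|$) against the cost $\sqrt{|B|}$ of handling the large factors by the cheap identity map.
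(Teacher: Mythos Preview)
Your proof is correct and takes essentially the same approach as the paper: split the factors by a size threshold $\tau$ with $\log\tau\asymp\sqrt{\log|S|}/\log\log|S|$, apply the \cite{ALN08} Hilbertian embedding on the small factors and the identity $\ell_2\hookrightarrow\ell_1$ on the large ones, and combine. The only cosmetic differences are that the paper normalizes the ALN maps to have lower Lipschitz constant $1$ (so no rescaling $\lambda$ is needed) and plugs in the optimal threshold directly rather than presenting the choice of $\tau$ as an optimization.
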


\subsection{Metric Kwapie{\'n}--Sch\"utt inequalities} In~\cite{KS89} (see also~\cite{KS85}) Kwapie{\'n} and Sch\"utt (implicitly) proved the following inequality, which holds for every $n\in \N$ and every $\{z_{jk}\}_{j,k\in \n}\subset L_1$.
\begin{equation}\label{eq:original KS}
\frac{1}{n}\sum_{j=1}^n\sum_{\e\in \{-1,1\}^n}\Big\|\sum_{k=1}^n\e_kz_{jk}\Big\|_1\lesssim \frac{1}{n!} \sum_{\pi\in S_n} \sum_{\e\in \{-1,1\}^n} \Big\|\sum_{j=1}^n\e_jz_{j\pi(j)}\Big\|_1,
\end{equation}
where $S_n$ denotes as usual the group of all permutations of $\n$.

The validity of~\eqref{eq:original KS} immediately implies the previously mentioned lower bound on the distortion of any linear embedding $T:\ell_2^n(\ell_1^n)\to L_1$.  To see this, identify from now on $\ell_2^n(\ell_1^n)$ with $M_n(\R)$ by considering for every $x=(x_1,\ldots,x_n)\in \ell_2^n(\ell_1^n)$ the matrix whose $j$'th row is $x_j\in \R^n$. With this convention,  apply~\eqref{eq:original KS} to $z_{jk}=T(e_{jk})$, where $e_{jk}$ is the $n$ by $n$ matrix whose $(j,k)$ entry equals $1$ and the rest of its entries vanish. Then for every $\e\in \{-1,1\}^n$ and $j\in \n$ we have
\begin{equation}\label{eq:lower KS}
\Big\|\sum_{k=1}^n\e_kz_{jk}\Big\|_1\ge \frac{\Big\|\sum_{k=1}^n\e_ke_{jk}\Big\|_{\ell_2^n(\ell_1^n)}}{\|T^{-1}\|} =\frac{n}{\|T^{-1}\|},
\end{equation}
and for every $\pi\in S_n$ and $\e\in \{-1,1\}^n$ we have
\begin{equation}\label{eq:upper KS}
\Big\|\sum_{j=1}^n\e_jz_{j\pi(j)}\Big\|_1\le
\|T\| \Big\|\sum_{j=1}^n\e_je_{j\pi(j)}\Big\|_{\ell_2^n(\ell_1^n)}=\|T\|\sqrt{n}.
\end{equation}
The only way for~\eqref{eq:lower KS} and~\eqref{eq:upper KS} to be compatible with~\eqref{eq:original KS} is if $\|T\|\cdot\|T^{-1}\|\gtrsim \sqrt{n}$.

In light of the above argument, it is natural to ask which Banach spaces satisfy~\eqref{eq:original KS}, i.e., to obtain an understanding of those Banach spaces $(Z,\|\cdot\|_Z)$ for which there exists $K=K(Z)\in (0,\infty)$ such that for every $n\in \N$ and every  $\{z_{jk}\}_{j,k\in \n}\subset Z$ we have
\begin{equation}\label{eq:general KS pi}
\frac{1}{n}\sum_{j=1}^n\sum_{\e\in \{-1,1\}^n}\Big\|\sum_{k=1}^n\e_kz_{jk}\Big\|_Z\le \frac{K}{n!} \sum_{\pi\in S_n} \sum_{\e\in \{-1,1\}^n} \Big\|\sum_{j=1}^n\e_jz_{j\pi(j)}\Big\|_Z.
\end{equation}
This question requires further investigation and obtaining a satisfactory characterization seems to be challenging. In particular, it seems to be unknown whether or not the Schatten trace class $S_1$ satisfies~\eqref{eq:general KS pi}. Regardless, it is clear that the requirement~\eqref{eq:general KS pi} is a local linear property, and therefore by Ribe's rigidity theorem~\cite{Rib76} it is preserved under uniform homeomorphisms of Banach spaces. In accordance with the Ribe program (see~\cite{Bal13,Nao12}) one should ask for a bi-Lipschitz invariant of metric spaces that, when restricted to the class of Banach spaces, is equivalent to~\eqref{eq:general KS pi}.

Following the methodology that was introduced by Enflo~\cite{Enf76} (see also~\cite{Gro83,BMW86}), a first attempt to obtain a bi-Lipschitz invariant that is (hopefully) equivalent to~\eqref{eq:general KS pi} is as follows. Consider those metric spaces $(X,d_X)$ for which there exists $K=K(X)\in (0,\infty)$ such that for every $n\in \N$ and every $f:M_n(\F_2)\to X$ we have
\begin{equation}\label{eq:failed attempt KS}
\frac{1}{n}\sum_{j=1}^n\sum_{x\in M_n(\F_2)}d_X\Big(f\Big(x+\sum_{k=1}^n e_{jk}\Big),f(x)\Big)\le \frac{K}{n!} \sum_{\pi\in S_n} \sum_{x\in M_n(\F_2)} d_X\Big(f\Big(x+\sum_{j=1}^n e_{j\pi(j)}\Big),f(x)\Big).
\end{equation}
If $X$ is in addition a Banach space and $\{z_{jk}\}_{j,k\in \n}\subset X$ then for $f(x)=\sum_{j=1}^n\sum_{k=1}^n (-1)^{x_{jk}}z_{jk}$ the inequality~\eqref{eq:failed attempt KS} becomes~\eqref{eq:general KS pi}. However, for every integer $n\ge 3$ no metric space that contains at least two points can satisfy~\eqref{eq:failed attempt KS} with $K< n/2$, as explained in Remark~\ref{rem:no permutation version} below. Thus, obtaining a metric characterization of the linear property~\eqref{eq:general KS pi} remains open.

We shall overcome this difficulty by first modifying the linear definition~\eqref{eq:general KS pi} so that it still implies the same nonembeddability result for $\ell_2^n(\ell_1^n)$, and at the same time we can prove that the reasoning that led to the metric inequality~\eqref{eq:failed attempt KS} now leads to an analogous inequality which does hold true for nontrivial metric spaces (specifically, we shall prove that it holds true for $L_1$).

\begin{definition}[Linear KS space]
Say that a Banach space $(Z,\|\cdot\|_Z)$ is a linear KS space if there exists $C=C(X)\in (0,\infty)$ such that for every $n\in \N$ and every  $\{z_{jk}\}_{j,k\in \n}\subset Z$ we have
\begin{equation}\label{eq:modified linear KS}
\frac{1}{n}\sum_{j=1}^n\sum_{\e\in \{-1,1\}^n}\Big\|\sum_{k=1}^n\e_kz_{jk}\Big\|_Z\le \frac{C}{n^n} \sum_{k\in \n^n} \sum_{\e\in \{-1,1\}^n} \Big\|\sum_{j=1}^n\e_jz_{jk_j}\Big\|_Z.
\end{equation}
\end{definition}
The difference between~\eqref{eq:modified linear KS} and~\eqref{eq:general KS pi} is that we replace the averaging over all permutations $\pi\in S_n$ by averaging over all mappings $\pi:\n\to \n$.  We shall see below that $L_1$ is a linear KS space.  The same reasoning that leads to~\eqref{eq:failed attempt KS} now leads us to consider the following new bi-Lipschitz invariant for metric spaces.

\begin{definition}[KS metric space]\label{def:modified metric KS}
Say that a metric space $(X,d_X)$ is a KS space if there exists $C=C(X)\in (0,\infty)$ such that for every $n\in 2\N$ and every  $f:M_n(\F_2)\to X$ we have
\begin{equation}\label{eq:modified metric KS}
 \frac{1}{n}\sum_{j=1}^n\sum_{x\in M_n(\F_2)}d_X\Big(f\Big(x+\sum_{k=1}^n e_{jk}\Big),f(x)\Big)\le \frac{C}{n^n} \sum_{k\in \n^n} \sum_{x\in M_n(\F_2)} d_X\Big(f\Big(x+\sum_{j=1}^n e_{jk_j}\Big),f(x)\Big).
\end{equation}
\end{definition}

\begin{remark}\label{rem:no odd}
The reason why in Definition~\ref{def:modified metric KS} we require~\eqref{eq:modified metric KS} to hold true only when $n\in \N$ is even is that no non-singleton metric space $(X,d_X)$ satisfies~\eqref{eq:modified metric KS} when $n\ge 3$ is an odd integer. Indeed, suppose that $a,b\in X$ are distinct and that $n\ge 3$ is an odd integer. For $x\in M_n(\F_2)$ write $\sigma(x)=\sum_{j=1}^{n-1}\sum_{k=1}^n x_{jk}\in \F_2$. Define $f: M_n(\F_2)\to X$ by setting $f(x)=a$ if $\sigma(x)=0$ and $f(x)=b$ if $\sigma(x)=1$. For every $j\in \{1,\ldots,n-1\}$ and $x\in M_n(\F_2)$ we have $\sigma(x+\sum_{k=1}^n e_{jk})=\sigma(x)+n\neq \sigma(x)$, since $n$ is odd. Consequently the left hand side of~\eqref{eq:modified metric KS} is nonzero (since $a$ and $b$ are distinct). But, for every $x\in M_n(\F_2)$ and $k\in \n^n$ we have $\sigma(x+\sum_{j=1}^n e_{jk_j})=\sigma(x)+n-1=\sigma(x)$, since $n$ is odd. Consequently the right hand side of~\eqref{eq:modified metric KS} vanishes. This parity issue is of minor significance: in Remark~\ref{rem:all n version} below we describe an inequality that is slightly more complicated than~\eqref{eq:modified metric KS} but make sense for every $n\in \N$ and in any metric space, and we show that it holds true for $L_1$-valued functions. This variant has the same nonembeddability consequences as~\eqref{eq:modified metric KS}, albeit yielding distortion lower bounds that are weaker by a constant factor.
\end{remark}

The following theorem is the main result of the present article. We shall soon see, in Section~\ref{sec:Lp nonembed} below, how it quickly implies Theorem~\ref{thm:pythagorean computation intro} (and more).

\begin{theorem}\label{thm:main L1 ineq} $L_1$ is a KS space. Namely, for all $n\in 2\N$ and every $f:M_n(\F_2)\to L_1$ we have
\begin{equation*}\label{eq:L1 has metric KS}
\sum_{j=1}^n\sum_{x\in M_n(\F_2)}\Big\|f\Big(x+\sum_{k=1}^n e_{jk}\Big)-f(x)\Big\|_1
\le \frac{2n}{n^n-(n-2)^n} \sum_{k\in \n^n} \sum_{x\in M_n(\F_2)}
\Big\|f\Big(x+\sum_{j=1}^n e_{jk_j}\Big)-f(x)\Big\|_1.
\end{equation*}
For every fixed $n\in 2\N$, the factor $2n/(n^n-(n-2)^n)$ above cannot be improved. So, $L_1$ satisfies~\eqref{eq:modified metric KS} for every $n\in 2\N$  with $$C=\sup_{n\in 2\N}\frac{2}{1-\left(1-\frac{2}{n}\right)^n}=\frac{2e^2}{e^2-1},$$
 and this value of $C$ cannot be improved.
\end{theorem}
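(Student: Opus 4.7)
The plan is to reduce the inequality to a coefficient-wise scalar statement via Fourier analysis on $\F_2^{n^2}=M_n(\F_2)$, after first passing from $L_1$-valued to $\{0,1\}$-valued test functions. First, the identity $\|u-v\|_{L_1(\mu)}=\int|u(\omega)-v(\omega)|\,d\mu(\omega)$, applied pointwise in $\omega$ to $\omega\mapsto f(\cdot)(\omega)$, reduces the $L_1$-valued inequality (with the same constant) to its real-valued counterpart. The layer-cake identity $|a-b|=\int_\R|\mathbf 1_{\{a>t\}}-\mathbf 1_{\{b>t\}}|\,dt$ then reduces the real-valued case to $f=\mathbf 1_A$ for arbitrary $A\subset M_n(\F_2)$.

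For $A\subset M_n(\F_2)$ write $\mathbf 1_A=\sum_{S\in M_n(\F_2)}\hat f(S)\chi_S$ with $\chi_S(x)=(-1)^{\langle S,x\rangle}$. Parseval yields $\E_x|\mathbf 1_A(x+v)-\mathbf 1_A(x)|=2\sum_S\hat f(S)^2(1-\chi_S(v))$. Letting $s_j(S)$ be the Hamming weight of the $j$-th row of $S$, one has $\chi_S(r_j)=(-1)^{s_j(S)}$, and independence of the coordinates of a uniform $k\in\n^n$ gives $\E_k\chi_S(v_k)=\prod_{j=1}^n(1-2s_j(S)/n)$. Setting
\begin{equation*}
m(S):=\#\{j:s_j(S)\text{ odd}\},\quad a_j:=1-\tfrac{2s_j(S)}{n},\quad t:=1-\tfrac{2}{n},
\end{equation*}
and using $\hat f(S)^2\ge0$, the inequality of Theorem~\ref{thm:main L1 ineq} reduces to the coefficient-wise bound
\begin{equation*}
m(S)\,(1-t^n)\le n\Big(1-\prod_{j=1}^n a_j\Big)\qquad\text{for every }S\in M_n(\F_2).
\end{equation*}

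The parity hypothesis $n\in 2\N$ enters precisely here: $s_j$ odd forces $s_j\in\{1,3,\ldots,n-1\}$, whence $|a_j|\le t$, while $|a_j|\le 1$ always. Thus $|\prod_j a_j|\le t^{m(S)}$, so $1-\prod_j a_j\ge 1-t^{m(S)}$, reducing the task to $m(1-t^n)\le n(1-t^m)$ for $0\le m\le n$, i.e.\ monotonicity of $m\mapsto(1-t^m)/m$ on positive integers for $t\in[0,1]$. This is equivalent, via $y=-m\ln t\ge 0$, to the elementary estimate $e^{-y}(1+y)\le 1$. Sharpness for each even $n$ is witnessed by $f=\chi_{v_{k_0}}$ for a fixed $k_0\in\n^n$: its Fourier mass sits on the single matrix $S=v_{k_0}$, which has $s_j=1$ for every $j$, so $m=n$ and $a_j=t$ throughout, and every estimate above is an equality; a direct computation gives both sides equal to $2n$, pinning down $C(n)=2/(1-(1-2/n)^n)$, which is increasing in even $n$ with limit $2/(1-e^{-2})=2e^2/(e^2-1)$. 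The main conceptual step is identifying the right Fourier quantities $m(S)$ and $\prod_j(1-2s_j(S)/n)$; once they are in hand, the remaining combinatorics is a one-variable monotonicity, and the role of the parity hypothesis becomes transparent.
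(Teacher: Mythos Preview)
Your proof is correct and follows essentially the same route as the paper: Fourier analysis on $M_n(\F_2)$ reduces the inequality to the coefficient-wise bound $m(S)(1-t^n)\le n(1-\prod_j a_j)$, which you then prove via $|\prod_j a_j|\le t^{m(S)}$ (using $n$ even) and the monotonicity of $m\mapsto (1-t^m)/m$, exactly as the paper does in~\eqref{eq:use n even}--\eqref{eq:to contrast n even}. The only difference is the initial reduction: the paper passes from $L_1$ to $L_2$ via Schoenberg's embedding (which yields the more general Theorem~\ref{thm:p version for snowflake} for all $p\in(0,2]$), whereas you integrate over $\omega$ and apply the layer-cake/cut-cone decomposition to land directly on $\{0,1\}$-valued $f$, where $|f(x+v)-f(x)|=(f(x+v)-f(x))^2$; this is precisely the viewpoint the paper records in Remark~\ref{rem:cut cone}, and your sharpness example $f=\chi_{v_{k_0}}$ is the paper's $\varphi$ (for $k_0=(1,\ldots,n)$).
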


Note that if $(Z,\|\cdot\|_Z)$ is a Banach space and $\{z_{jk}\}_{j,k\in \n}\subset Z$ then by considering the mapping $f:M_n(\F_2)\to Z$ given by $f(x)=\sum_{j=1}^n\sum_{k=1}^n (-1)^{x_{jk}}z_{jk}$ we see that if $Z$ is a KS space as a metric space then it is also a linear KS space (with the same constant $C$). We do not know whether or not the converse holds true, i.e., we ask the following interesting open question: if a Banach space $(Z,\|\cdot\|_Z)$ is a linear KS space then is it also a KS space as a metric space? Understanding which Banach spaces are linear KS spaces is a wide-open research direction. In particular, we ask whether the Schatten trace class $S_1$ is a KS space as a metric space, or even whether it is a linear KS space. There are inherent conceptual difficulties that indicate that  our proof of Theorem~\ref{thm:main L1 ineq} cannot be extended to the case of $S_1$ without a substantial new idea; see Remark~\ref{rem:Schatten difficulties} below.

Our proof of Theorem~\ref{thm:main L1 ineq} consists of simple Fourier analysis combined with a nonlinear transformation; see Section~\ref{sec:main proof} below. The simplicity of this proof indicates one of the values of generalizing linear inequalities such as~\eqref{eq:modified linear KS} to their stronger nonlinear counterparts, since this brings  genuinely nonlinear tools into play. In particular, we thus obtain a very simple proof of the linear inequality~\eqref{eq:modified linear KS} through an argument which would have probably not been found without the need to generalize~\eqref{eq:modified linear KS} to a metric inequality as part of the Ribe program.

\subsection{Embeddings of $\ell_q(\F_2^n,\|\cdot\|_p)$ into $L_1$}\label{sec:Lp nonembed} Suppose that $q>p\ge 1$. By arguing as in~\eqref{eq:lower KS} and~\eqref{eq:upper KS} one deduces from the Kwapie{\'n}--Sch\"utt inequality~\eqref{eq:original KS} that for every $n\in \N$, every injective linear mapping $T:\ell_q^n(\ell_p^n)\to L_1$ must satisfy
\begin{equation}\label{eq:Banach-Mazur}
\|T\|\cdot\|T^{-1}\|\gtrsim n^{\frac{1}{p}-\frac{1}{q}}.
\end{equation}
Note that this conclusion was stated by Kwapie{\'n} and Sch\"utt in~\cite[Crollary~3.4]{KS89} under the additional assumption that $q\le 2$, but this restriction is not necessary. By a differentiation argument (see~\cite[Chapter~7]{BL00}), it follows from~\eqref{eq:Banach-Mazur} that
\begin{equation}\label{eq:c1 lower bound full space}
c_1\left(\ell_q^n(\ell_p^n)\right)\gtrsim n^{\frac{1}{p}-\frac{1}{q}}.
\end{equation}

We previously deduced from the case $q=2$ and $p=1$ of~\eqref{eq:c1 lower bound full space} that $\lim_{n\to \infty} c_1(\ell_2^n(\F_2^n))=\infty$.
This was done in the paragraph that followed~\eqref{eq:L1 dist explodes}, relying on the fact that any finite subset of $\ell_1$ admits
 an embedding with $O(1)$ distortion into $\F_2^m$ for some $m\in \N$. The analogous assertion is not true for $p>1$, and therefore
 despite the validity of~\eqref{eq:c1 lower bound full space} it was previously unknown whether or not $\sup_{n\in \N} c_1(\ell_q(\F_2^n,\|\cdot\|_p))=\infty$.
 Our metric KS  inequality of Theorem~\ref{thm:main L1 ineq} answers this question.

\begin{theorem}\label{thm:sharp pq} Suppose that $1\le p<q$ and $n\in \N$ then
\begin{equation}\label{eq:discrete distortion sharp}
c_1\left(\ell_q^n\left(\F_2^n,\|\cdot\|_p\right)\right)\asymp n^{\frac{1}{p}-\frac{1}{q}}.
\end{equation}
\end{theorem}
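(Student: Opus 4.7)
The plan is to establish the upper and lower bounds of~\eqref{eq:discrete distortion sharp} by essentially independent routes: the lower bound is a direct application of Theorem~\ref{thm:main L1 ineq}, while the upper bound proceeds by factoring through $\ell_p^n(\F_2^n,\|\cdot\|_p)$ and then using a snowflake trick.

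For the upper bound, I would first observe that for any metric space $X$ and any $1\le p\le q$, the identity map $\ell_q^n(X)\to\ell_p^n(X)$ has distortion at most $n^{1/p-1/q}$, because for nonnegative reals $a_1,\ldots,a_n$ the power-mean inequality yields $(\sum_{j=1}^n a_j^q)^{1/q}\le (\sum_{j=1}^n a_j^p)^{1/p}\le n^{1/p-1/q}(\sum_{j=1}^n a_j^q)^{1/q}$. It therefore suffices to embed $\ell_p^n(\F_2^n,\|\cdot\|_p)$ isometrically into $L_1$. The key point here is that for any $x_j,y_j\in\F_2^n\subseteq\{0,1\}^n$ one has $\|x_j-y_j\|_p^p=\|x_j-y_j\|_1$, since each coordinate of $x_j-y_j$ lies in $\{-1,0,1\}$. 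Consequently $\ell_p^n(\F_2^n,\|\cdot\|_p)$ is isometric to $\F_2^{n^2}$ with metric $\|x-y\|_1^{1/p}$, i.e.\ the $(1/p)$-snowflake of the Hamming cube on $n^2$ bits. Since that cube isometrically embeds into $\ell_1^{n^2}$, and since (by a classical Bretagnolle--Dacunha Castelle--Krivine-type $\alpha$-stable argument) the $\alpha$-snowflake of any subset of $L_1$ with $\alpha\in(0,1]$ embeds isometrically into $L_1$, the composition yields $c_1(\ell_q^n(\F_2^n,\|\cdot\|_p))\lesssim n^{1/p-1/q}$.

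For the lower bound, I would apply Theorem~\ref{thm:main L1 ineq} to $\phi\circ f$, where $\phi:\ell_q^n(\F_2^n,\|\cdot\|_p)\to L_1$ is a hypothetical embedding of distortion $D$ with scaling factor $s$, and $f:M_n(\F_2)\to\ell_q^n(\F_2^n,\|\cdot\|_p)$ is the tautological identification sending a matrix to the tuple of its $n$ rows. Under $f$, the two increments appearing in~\eqref{eq:modified metric KS} are transparent: adding $\sum_{k=1}^n e_{jk}$ to $x$ flips all $n$ entries of row $j$, yielding displacement $\|\1\|_p=n^{1/p}$ in $\ell_q^n(\F_2^n,\|\cdot\|_p)$, while adding $\sum_{j=1}^n e_{jk_j}$ flips one entry in each of the $n$ rows, yielding displacement $(\sum_{j=1}^n 1^q)^{1/q}=n^{1/q}$. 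Because every summand on each side of the inequality takes the same value, the averages over $x\in M_n(\F_2)$, $j\in\n$, and $k\in\n^n$ are trivial; after using the bi-Lipschitz bounds on $\phi$ one is left with $s\cdot n^{1/p}\le CDs\cdot n^{1/q}$, i.e.\ $D\ge n^{1/p-1/q}/C$. For odd $n$, where Theorem~\ref{thm:main L1 ineq} does not directly apply, I would either invoke the variant indicated in Remark~\ref{rem:all n version}, or simply reduce to the nearest even integer via the trivial $O(1)$-distortion inclusions between $\ell_q^n(\F_2^n,\|\cdot\|_p)$ and $\ell_q^{n\pm 1}(\F_2^{n\pm 1},\|\cdot\|_p)$, losing only a universal constant.

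Conceptually the argument is essentially mechanical once Theorem~\ref{thm:main L1 ineq} is in hand, with the main obstacle being the isometric snowflake embedding $(\F_2^{n^2},\|\cdot\|_1^{1/p})\hookrightarrow L_1$ used in the upper bound; this is a standard fact, but if one prefers to avoid it, any bounded-distortion snowflake embedding into $L_1$ would still deliver the asymptotic $\asymp n^{1/p-1/q}$ required by~\eqref{eq:discrete distortion sharp}.
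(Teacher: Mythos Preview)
Your proposal is correct and follows essentially the same route as the paper: the lower bound applies Theorem~\ref{thm:main L1 ineq} to the tautological identification $M_n(\F_2)\cong\ell_q^n(\F_2^n,\|\cdot\|_p)$ composed with the putative embedding (reducing odd $n$ to even $n$ via the isometric inclusion $\ell_q^{n-1}(\F_2^{n-1},\|\cdot\|_p)\hookrightarrow\ell_q^n(\F_2^n,\|\cdot\|_p)$), and the upper bound compares the $\ell_q^n(\ell_p^n)$ metric on $M_n(\F_2)$ to the $(1/p)$-snowflake of the Hamming metric and invokes Bretagnolle--Dacunha-Castelle--Krivine. The only cosmetic difference is that the paper states the H\"older step directly as the two-sided bound $\|x-y\|_{\ell_1^{n^2}}^{1/p}/n^{1/p-1/q}\le\|x-y\|_{\ell_q^n(\ell_p^n)}\le\|x-y\|_{\ell_1^{n^2}}^{1/p}$ rather than phrasing it as factoring through $\ell_p^n(\F_2^n,\|\cdot\|_p)$.
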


It is worthwhile to note here that while Theorem~\ref{thm:sharp pq} yields a sharp asymptotic evaluation of $c_1\left(\ell_q^n\left(\F_2^n,\|\cdot\|_p\right)\right)$, the corresponding bound~\eqref{eq:c1 lower bound full space} in the continuous setting is not always sharp. Specifically, in Section~\ref{sec:embeddings} we explain that
\begin{equation}\label{eq:cont dist}
c_1\left(\ell_q^n(\ell_p^n)\right) \asymp \left\{\begin{array}{ll}
n^{\frac{1}{p}-\frac{1}{q}} &\mathrm{if\ } 1\le p<q\ \mathrm{and}\ p\le 2,\\
n^{1-\frac{1}{p}-\frac{1}{q}} &\mathrm{if\ } 2\le p<q.
\end{array}\right.
\end{equation}
From~\eqref{eq:discrete distortion sharp} and~\eqref{eq:cont dist} we see that  if $1\le p<q$ and $p\le 2$ then $c_1\left(\ell_q^n\left(\F_2^n,\|\cdot\|_p\right)\right)\asymp c_1\left(\ell_q^n(\ell_p^n)\right)$, while if  $2<p<q$ then since $1/p-1/q<1-1/p-1/q$ we have  $c_1\left(\ell_q^n\left(\F_2^n,\|\cdot\|_p\right)\right)=o\left(c_1\left(\ell_q^n(\ell_p^n)\right)\right)$.

The upper bound on $c_1(\ell_q^n\left(\F_2^n,\|\cdot\|_p\right))$ that appears in~\eqref{eq:discrete distortion sharp} will be proven in Section~\ref{sec:embeddings}.
We shall now show how the lower bound on
$c_1(\ell_q^n\left(\F_2^n,\|\cdot\|_p\right))$ that appears in~\eqref{eq:discrete distortion sharp} quickly
follows from Theorem~\ref{thm:main L1 ineq}. This will also establish Theorem~\ref{thm:pythagorean computation intro}  as a special case. So, suppose that $D\in [1,\infty)$ and $f:M_n(\F_2)\to L_1$ satisfies $\|x-y\|_{\ell_q^n(\ell_p^n)}\le \|f(x)-f(y)\|_1\le D\|x-y\|_{\ell_q^n(\ell_p^n)}$ for every $x,y\in M_n(\F_2)$. Our goal is to bound $D$ from below. Since $\ell_q^n\left(\F_2^n,\|\cdot\|_p\right)$ contains an isometric copy of $\ell_q^{n-1}\left(\F_2^{n-1},\|\cdot\|_p\right)$, we may assume that $n$ is even. By Theorem~\ref{thm:main L1 ineq} applied to $f$ we have
\begin{equation}\label{eq:quote theorem}
\frac{1}{n}\sum_{j=1}^n\sum_{x\in M_n(\F_2)}\Big\|f\Big(x+\sum_{k=1}^n e_{jk}\Big)-f(x)\Big\|_1
\lesssim \frac{1}{n^n}\sum_{k\in \n^n} \sum_{x\in M_n(\F_2)}
\Big\|f\Big(x+\sum_{j=1}^n e_{jk_j}\Big)-f(x)\Big\|_1.
\end{equation}
But,
\begin{equation}\label{eq:contrast1}
\sum_{j=1}^n\sum_{x\in M_n(\F_2)}\Big\|f\Big(x+\sum_{k=1}^n e_{jk}\Big)-f(x)\Big\|_1\ge 2^{n^2}\sum_{j=1}^n\Big\|\sum_{k=1}^n e_{jk}\Big\|_{\ell_q^n(\ell_p^n)}=2^{n^2} n^{1+\frac{1}{p}},
\end{equation}
and
\begin{equation}\label{eq:contrast2}
\sum_{k\in \n^n} \sum_{x\in M_n(\F_2)}
\Big\|f\Big(x+\sum_{j=1}^n e_{jk_j}\Big)-f(x)\Big\|_1\le D2^{n^2}\sum_{k\in \n^n}\Big\|\sum_{j=1}^n e_{jk_j}\Big\|_{\ell_q^n(\ell_p^n)}=D2^{n^2}n^{n+\frac{1}{q}}.
\end{equation}
For~\eqref{eq:quote theorem} to be compatible with~\eqref{eq:contrast1} and~\eqref{eq:contrast2} we must have $D\gtrsim n^{\frac{1}{p}-\frac{1}{q}}$. \qed

\begin{remark}\label{rem:mn intro}
We only discussed $L_1$ embeddings of $\ell_q^n\left(\F_2^n,\|\cdot\|_p\right)$, but it is natural to also ask about embeddings of  $\ell_q^m\left(\F_2^n,\|\cdot\|_p\right)$. However, it turns out that the case $m=n$ is the heart of the matter,  i.e., the $L_1$ distortion of  $\ell_q^m\left(\F_2^n,\|\cdot\|_p\right)$ is up to a constant factor the same as the $L_1$ distortion of   $\ell_q^k\left(\F_2^k,\|\cdot\|_p\right)$ with $k=\min\{m,n\}$; see Remark~\ref{rem:mn proof} below.
\end{remark}

\section{Proof of Theorem~\ref{thm:main L1 ineq}}\label{sec:main proof}

The stated sharpness of Theorem~\ref{thm:main L1 ineq} is simple: consider the function $\f:M_n(\F_2)\to \R$ given by  $\f(x)= (-1)^{x_{11}+\ldots+x_{nn}}$. For this choice of $\f$ we have $\f(x+\sum_{k=1}^n e_{jk})=-\f(x)\in \{-1,1\}$ for every $x\in M_n(\F_2)$ and $j\in \n$. Consequently,
\begin{equation}\label{eq:sharpness1}
\sum_{j=1}^n\sum_{x\in M_n(\F_2)}\Big|\f\Big(x+\sum_{k=1}^n e_{jk}\Big)-\f(x)\Big|=n2^{n^2+1}.
\end{equation}
Also, for every $x\in M_n(\F_2)$ and $k\in \n^n$ we have $\f(x+\sum_{j=1}^n e_{jk_j})=(-1)^{\ell(k)} \f(x)$, where $\ell(k)=|\{j\in \n:\ k_j=j\}|=\sum_{j=1}^n \1_{\{k_j=j\}}$. Consequently,
\begin{multline}\label{eq:sharpness 2}
\sum_{k\in \n^n}\sum_{x\in M_n(\F_2)}\Big|\f\Big(x+\sum_{j=1}^n e_{jk_j}\Big)-\f(x)\Big|
=2^{n^2}\sum_{k\in \n^n}\left(1-(-1)^{\sum_{j=1}^n \1_{\{k_j=j\}}}\right)\\=2^{n^2}n^n-2^{n^2}\prod_{j=1}^n \sum_{k=1}^n (-1)^{\1_{\{k=j\}}}=2^{n^2}\left(n^n-(n-2)^n\right).
\end{multline}
The identities~\eqref{eq:sharpness1} and~\eqref{eq:sharpness 2} demonstrate that for every fixed $n\in \N$ the factor $2n/(n^n-(n-2)^n)$ in Theorem~\ref{thm:main L1 ineq} cannot be replaced by any strictly smaller number.

Passing now to the proof of Theorem~\ref{thm:main L1 ineq}, we will actually prove the following statement, the case $p=1$ of which is Theorem~\ref{thm:main L1 ineq} itself.

\begin{theorem}\label{thm:p version for snowflake}
Suppose that $p\in (0,2]$ and $n\in 2\N$. Then for every $f:M_n(\F_2)\to L_p$ we have
\begin{equation*}
\sum_{j=1}^n\sum_{x\in M_n(\F_2)}\Big\|f\Big(x+\sum_{k=1}^n e_{jk}\Big)-f(x)\Big\|_p^p
\le \frac{2n}{n^n-(n-2)^n} \sum_{k\in \n^n} \sum_{x\in M_n(\F_2)}
\Big\|f\Big(x+\sum_{j=1}^n e_{jk_j}\Big)-f(x)\Big\|_p^p.
\end{equation*}
\end{theorem}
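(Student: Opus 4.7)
My plan is a two-step reduction---pointwise in the target, then via a Fourier representation of $|\cdot|^p$---that turns the claim into its $p=2$ instance on $\C$-valued functions, after which Walsh expansion on $\F_2^{n^2}$ reduces the $p=2$ case to an elementary inequality in one real variable. For the first step, $\|f(u)-f(v)\|_{L_p}^p = \int_\Omega |f(u)(\omega)-f(v)(\omega)|^p\, d\mu(\omega)$, so it suffices to treat scalar-valued $f:M_n(\F_2)\to\R$ with $|\cdot|^p$ in place of $\|\cdot\|_p^p$. For $p\in(0,2)$ I then use the classical identity $|s|^p = \frac{c_p}{2}\int_\R |1-e^{ist}|^2 |t|^{-(1+p)}\, dt$ for some $c_p>0$; with $g_t(x):=e^{itf(x)}$ this gives
$$|f(u)-f(v)|^p = \frac{c_p}{2}\int_\R \frac{|g_t(u)-g_t(v)|^2}{|t|^{1+p}}\, dt.$$
Substituting into both sides and interchanging the sums with the integral in $t$ reduces the $p<2$ case to its $p=2$ instance applied to each $\C$-valued $g_t$, with the same constant $2n/(n^n-(n-2)^n)$. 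The case $p=2$ of the theorem needs no representation and follows directly from the $p=2$ computation below.

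For the $p=2$ step I expand $g = \sum_{A\subset \n\times\n} \hat g(A) W_A$ in the Walsh basis $W_A(x) := \prod_{(j,k)\in A}(-1)^{x_{jk}}$. For any $h\in M_n(\F_2)$, Parseval yields $\sum_x |g(x+h)-g(x)|^2 = 2^{n^2}\sum_A |\hat g(A)|^2\, |W_A(h)-1|^2$. Writing $r_j(A):=|A\cap(\{j\}\times\n)|$, a direct calculation gives $W_A(\sum_k e_{jk})=(-1)^{r_j(A)}$ and $W_A(\sum_j e_{jk_j})=\prod_j (-1)^{\1\{(j,k_j)\in A\}}$; summing over $j$, respectively over $k\in\n^n$, produces $\sum_{j=1}^n |W_A(\sum_k e_{jk})-1|^2 = 4\,a(A)$, where $a(A) := |\{j : r_j(A)\text{ odd}\}|$, and $\sum_{k\in\n^n} |W_A(\sum_j e_{jk_j})-1|^2 = 2\bigl[n^n - \prod_{j=1}^n(n - 2r_j(A))\bigr]$. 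Hence the $p=2$ inequality reduces to the pointwise bound
$$a(A)\bigl(n^n-(n-2)^n\bigr) \;\le\; n\Bigl[n^n - \prod_{j=1}^n (n-2r_j(A))\Bigr]\qquad \text{for every }A\subset\n\times\n.$$

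This last inequality I verify by elementary means. Since $n$ is even, if $r_j$ is odd then $r_j\in\{1,3,\ldots,n-1\}$ and $|n-2r_j|\le n-2$; otherwise $|n-2r_j|\le n$. Hence $|\prod_j(n-2r_j(A))|\le n^{n-a}(n-2)^a$, so in particular $\prod_j(n-2r_j(A))\le n^{n-a}(n-2)^a$, and it suffices to show $a(n^n-(n-2)^n)\le n(n^n - n^{n-a}(n-2)^a)$. Setting $\alpha:=1-2/n\in[0,1)$ and dividing by $n^n$, this reads $a(1-\alpha^n)\le n(1-\alpha^a)$, equivalently $\phi(a)\ge\phi(n)$ for $\phi(t):=(1-\alpha^t)/t$. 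Concavity of $t\mapsto 1-\alpha^t$ (which vanishes at $0$) makes the chord slope $\phi$ decreasing on $(0,\infty)$, so $\phi(a)\ge\phi(n)$ whenever $a\le n$, completing the argument. I expect the main (modest) obstacle to be this parity step: the estimate $|n-2r_j|\le n-2$ for odd $r_j$ crucially uses that $n$ is even, which is precisely the source of the hypothesis $n\in 2\N$ (compare Remark~\ref{rem:no odd}).
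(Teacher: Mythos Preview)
Your proof is correct and follows essentially the same route as the paper: Walsh expansion on $M_n(\F_2)$ reduces the $p=2$ case to the pointwise inequality $a(n^n-(n-2)^n)\le n(n^n-n^{n-a}(n-2)^a)$, which both you and the paper prove via the monotonicity of $t\mapsto (1-\alpha^t)/t$ with $\alpha=1-2/n$. The only cosmetic difference is in the reduction to $p=2$: the paper invokes Schoenberg's theorem that $(L_p,\|x-y\|_p^{p/2})$ embeds isometrically into $L_2$, whereas you unpack this by first passing to scalar $f$ and then using the explicit representation $|s|^p=\tfrac{c_p}{2}\int_\R |1-e^{ist}|^2|t|^{-1-p}\,dt$; these are the same fact in two guises.
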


\begin{proof}
By a classical theorem of Schoenberg~\cite{Sch38}, the metric space $(L_p,\|x-y\|_p^{p/2})$ admits an isometric embedding into $L_2$. Since the desired inequality is purely metric, i.e., it involves only distances between various values of $f$, it suffices to prove it for $p=2$ and then apply it to the composition of $f$ with the Schoenberg isometry so as to deduce the desired inequality for general $p\in (0,2]$. In order to prove the case $p=2$, it suffices to prove the desired inequality when $f$ is real-valued (deducing the case of $L_2$-valued $f$ by integrating the resulting point-wise inequality).

Suppose then that $f:M_n(\F_2)\to \R$. We shall use below standard Fourier-analytic arguments on $M_n(\F_2)$, considered as a vector space (of dimension $n^2$) over $\F_2$. Specifically, one can write
$$
f(x)=\sum_{A_1,\ldots,A_n\subset \n} \widehat{f}(A_1,\ldots,A_n)(-1)^{\sum_{j=1}^n\sum_{k\in A_j}x_{jk}},
$$
where for every $A_1,\ldots,A_n\subset \n$,
$$
\widehat{f}(A_1,\ldots,A_n)\eqdef \frac{1}{2^{n^2}}\sum_{x\in M_n(\F_2)} (-1)^{\sum_{j=1}^n\sum_{k\in A_j}x_{jk}}f(x).
$$
Then, for every $x\in M_n(\F_2)$ and $j\in \n$ we have
\begin{align*}
f\Big(x+\sum_{k=1}^n e_{jk}\Big)-f(x)&=\sum_{A_1,\ldots,A_n\subset \n} \widehat{f}(A_1,\ldots,A_n)\left((-1)^{|A_j|}-1\right)(-1)^{\sum_{s=1}^n\sum_{k\in A_s}x_{sk}}\\
&=-2\sum_{\substack{A_1,\ldots,A_n\subset \n\\ |A_j|\equiv 1\mod 2}} \widehat{f}(A_1,\ldots,A_n)(-1)^{\sum_{s=1}^n\sum_{k\in A_s}x_{sk}}.
\end{align*}
Hence, by the orthogonality of the functions $\{x\mapsto (-1)^{\sum_{s=1}^n\sum_{k\in A_s}x_{sk}}\}_{A_1,\ldots,A_n\subset \n}$ on $M_n(\F_2)$,
\begin{align}\label{eq:lower identity}
\sum_{j=1}^n\sum_{x\in M_n(\F_2)}&\Big(f\Big(x+\sum_{k=1}^n e_{jk}\Big)-f(x)\Big)^2\nonumber \\  &=2^{n^2+2}\sum_{A_1,\ldots,A_n\subset \n}\left|\left\{j\in \n:\ |A_j|\equiv 1\mod 2\right\}\right|\widehat{f}(A_1,\ldots,A_n)^2.
\end{align}
At the same time, for every $x\in M_n(\F_2)$ and $k\in \n^n$ we have
$$
f\Big(x+\sum_{j=1}^n e_{jk_j}\Big)-f(x)=\sum_{A_1,\ldots,A_n\subset \n} \widehat{f}(A_1,\ldots,A_n)\left((-1)^{\sum_{j=1}^n\1_{A_j}(k_j)}-1\right)(-1)^{\sum_{j=1}^n\sum_{k\in A_j}x_{jk}}.
$$
Using orthogonality again, we therefore have
\begin{align}\label{eq:upper identity}
\sum_{k\in \n^n} &\sum_{x\in M_n(\F_2)}
\Big(f\Big(x+\sum_{j=1}^n e_{jk_j}\Big)-f(x)\Big)^2\nonumber \\ \nonumber &=2^{n^2}\sum_{k\in \n^n}\sum_{A_1,\ldots,A_n\subset \n} \widehat{f}(A_1,\ldots,A_n)^2\left((-1)^{\sum_{j=1}^n\1_{A_j}(k_j)}-1\right)^2\\
 &= 2^{n^2+1} \sum_{A_1,\ldots,A_n\subset \n} \widehat{f}(A_1,\ldots,A_n)^2 \sum_{k\in \n^n}\left(1-(-1)^{\sum_{j=1}^n\1_{A_j}(k_j)}\right).
\end{align}

Fixing $A_1,\ldots,A_n\subset \n$, denote $S\eqdef \left\{j\in \n:\ |A_j|\equiv 1\mod 2\right\}$. Since $n$ is even, if $j\in S$ then $|A_j|\in \{1,\ldots,n-1\}$, and consequently $|2|A_j|-n|\le n-2$. Hence,
\begin{multline}\label{eq:use n even}
\sum_{k\in \n^n}\left(1-(-1)^{\sum_{j=1}^n\1_{A_j}(k_j)}\right)=n^n-\prod_{j=1}^n \sum_{k=1}^n (-1)^{\1_{A_j}(k)}\\ =n^n-\prod_{j=1}^n\left(n-2|A_j|\right)\ge n^n-\prod_{j=1}^n\big|2|A_j|-n\big|\ge n^n-n^{n-|S|}(n-2)^{|S|}.
\end{multline}
Since the mapping $|S|\mapsto \left(n^n-n^{n-|S|}(n-2)^{|S|}\right)/|S|$ is decreasing in $|S|$, it follows from~\eqref{eq:use n even} that
\begin{equation}\label{eq:to contrast n even}
\sum_{k\in \n^n}\left(1-(-1)^{\sum_{j=1}^n\1_{A_j}(k_j)}\right)\ge \frac{n^n-(n-2)^n}{n}\left|\left\{j\in \n:\ |A_j|\equiv 1\mod 2\right\}\right|.
\end{equation}
The desired inequality now follows by substituting~\eqref{eq:to contrast n even} into~\eqref{eq:upper identity} and recalling~\eqref{eq:lower identity}.
\end{proof}

\begin{remark}\label{rem:cut cone}
By the cut-cone decomposition of $L_1$ metrics (see e.g.~\cite{DL97}), the inequality of Theorem~\eqref{thm:main L1 ineq} is equivalent to the following (also sharp) isoperimetric-type inequality. For every and $n\in 2\N$ and $S\subseteq M_n(\F_2)$ we have
\begin{equation}\label{eq:ispoerimetric}
\sum_{j=1}^n \Big|\Big\{x\in S:\ x+\sum_{k=1}^n e_{jk}\notin S\Big\}\Big|\le \frac{2n}{n^n-(n-2)^n}\sum_{k\in \n^n} \Big|\Big\{x\in S:\ x+\sum_{j=1}^n e_{jk_j}\notin S\Big\}\Big|.
\end{equation}
Due to the simplicity of our proof of Theorem~\ref{thm:main L1 ineq}, we did not attempt to obtain a direct combinatorial proof of~\eqref{eq:ispoerimetric}, though we believe that this should be doable (and potentially instructive). We also did not attempt to characterize the equality cases in~\eqref{eq:ispoerimetric}.
\end{remark}

\begin{remark}\label{rem:all n version} In Remark~\ref{rem:no odd} we have seen that~\eqref{eq:modified metric KS} can hold true only if $n\in \N$ is even. However, this parity issue can remedied through the following (sharp) inequality, which holds true for every $n\in \N$, every $p\in (0,2]$ and every $f:M_n(\F_2)\to L_p$.
\begin{align}\label{eq:modified with y}
\nonumber \frac{1}{n}\sum_{j=1}^n&\sum_{x\in M_n(\F_2)}\sum_{y\in \F_2^n}\Big\|f\Big(x+\sum_{k=1}^n y_je_{jk}\Big)-f(x)\Big\|_p^p
\\\nonumber & \le \frac{2}{1-\left(1-\frac{1}{n}\right)^n}\cdot \frac{1}{n^n2^{n^2+n}} \sum_{k\in \n^n} \sum_{x\in M_n(\F_2)}\sum_{y\in \F_2^n}
\Big\|f\Big(x+\sum_{j=1}^n y_je_{jk_j}\Big)-f(x)\Big\|_p^p\\
&\le \frac{2e}{e-1}\cdot \frac{1}{n^n2^{n^2+n}} \sum_{k\in \n^n} \sum_{x\in M_n(\F_2)}\sum_{y\in \F_2^n}
\Big\|f\Big(x+\sum_{j=1}^n y_je_{jk_j}\Big)-f(x)\Big\|_p^p.
\end{align}
The distortion lower bounds that we obtained as a consequence of Theorem~\ref{thm:main L1 ineq} also follow mutatis mutandis from~\eqref{eq:modified with y}, though they are weaker by a constant factor.

To prove~\eqref{eq:modified with y}, note that, exactly as in the beginning of the proof of Theorem~\ref{thm:p version for snowflake}, it suffices to prove~\eqref{eq:modified with y} when $p=2$ and $f:M_n(\F_2)\to \R$. Now, argue as in~\eqref{eq:upper identity} to obtain the following identity.
\begin{align}\label{eq:new versuon upper}
\sum_{k\in \n^n} &\sum_{x\in M_n(\F_2)}\sum_{y\in \F_2^n}
\Big(f\Big(x+\sum_{j=1}^n y_je_{jk_j}\Big)-f(x)\Big)^2\nonumber \\
 &= 2^{n^2+1} \sum_{A_1,\ldots,A_n\subset \n} \widehat{f}(A_1,\ldots,A_n)^2 \sum_{k\in \n^n}\sum_{y\in \F_2^n}\left(1-(-1)^{\sum_{j=1}^ny_j\1_{A_j}(k_j)}\right).
\end{align}
For every $A_1,\ldots,A_n\subset \n$ and $k\in \n^n$ we have
$$
\sum_{y\in \F_2^n}\left(1-(-1)^{\sum_{j=1}^ny_j\1_{A_j}(k_j)}\right)=\left\{
\begin{array}{ll}
2^n &\mathrm{if\ }k_j\in A_j\ \mathrm{for\ some\ }j\in \n,\\ 0 &\mathrm{otherwise}.\end{array}\right.
$$
Hence, denoting $T\eqdef \{j\in \n:\ A_j\neq \emptyset\}\supseteq \{j\in \n:\ |A_j|\equiv 1\mod 2\}\eqdef S$,
\begin{multline}\label{eq:T}
\sum_{k\in \n^n}\sum_{y\in \F_2^n}\left(1-(-1)^{\sum_{j=1}^ny_j\1_{A_j}(k_j)}\right)=2^n\sum_{k\in \n^n}\left(1-\1_{\{\forall\, j\in\n,\quad k_j\notin A_j\}}\right)\\= 2^n\Big(n^n-\prod_{j=1}^n(n-|A_j|)^n\Big)\ge 2^n \Big(n^n-n^{n-|T|}(n-1)^{|T|}\Big)\ge 2^n\left(n^n-(n-1)^n\right)\frac{|T|}{n}.
\end{multline}
Consequently, by~\eqref{eq:new versuon upper} and~\eqref{eq:T}, combined with the fact that $|T|\ge |S|$, we have
\begin{align*}
&\sum_{k\in \n^n} \sum_{x\in M_n(\F_2)}\sum_{y\in \F_2^n}
\Big(f\Big(x+\sum_{j=1}^n y_je_{jk_j}\Big)-f(x)\Big)^2\\&\ge \frac{2^{n^2+n+1}\left(n^n-(n-1)^n\right)}{n}\sum_{A_1,\ldots,A_n\subset \n} \left|\left\{j\in \n:\ |A_j|\equiv 1\mod 2\right\}\right|\widehat{f}(A_1,\ldots,A_n)^2\\
&\stackrel{\eqref{eq:lower identity}}{=} \frac{2^{n}\left(n^n-(n-1)^n\right)}{2n}\sum_{j=1}^n\sum_{x\in M_n(\F_2)}\Big(f\Big(x+\sum_{k=1}^n e_{jk}\Big)-f(x)\Big)^2.
\end{align*}
This completes the proof of~\eqref{eq:modified with y}.
\end{remark}

\begin{remark}\label{rem:no permutation version}
As stated in the Introduction, the ``vanilla" metric Kwapie{\'n}--Sch\"utt inequality~\eqref{eq:failed attempt KS} cannot hold true in any non-singleton metric space $(X,d_X)$. To see this, note first that we have already seen in Remark~\ref{rem:no odd} that if $n\in \N$ is odd then~\eqref{eq:failed attempt KS} fails to hold true for any $K>0$. So, suppose that $n\ge 4$ is an even integer. It suffice to deal with $X=\{-1,1\}\subset \R$.  Define $\psi:M_n(\F_2)\to \{-1,1\}$ by
$
\psi(x)= (-1)^{x_{11}+\sum_{j=2}^{n}\sum_{k=4}^{n} x_{jk}}.
$
For every $x\in M_n(\F_2)$ we have $\psi(x+\sum_{k=1}^n e_{1k})=-\psi(x)$, and for $j\in \{2,\ldots,n\}$ we have $\psi(x+\sum_{k=1}^n e_{jk})=(-1)^{n-3}\psi(x)=-\psi(x)$, since $n$ is even. Consequently,
\begin{equation}\label{eq:psi left}
\sum_{j=1}^n\sum_{x\in M_n(\F_2)}\Big|\psi\Big(x+\sum_{k=1}^n e_{jk}\Big)-\psi(x)\Big|=n2^{n^2+1}.
\end{equation}
At the same time, for every $x\in M_n(\F_2)$ and $\pi\in S_n$ we have
\begin{equation}\label{eq:psi shift identity}
\psi \Big(x+\sum_{j=1}^n e_{j\pi(j)}\Big)=(-1)^{\1_{\{\pi(1)=1\}}+\sum_{j=2}^n\1_{\{\pi(j)\ge 4\}}}\psi(x).
\end{equation}
If $\pi(1)=1$ then $\{2,3\}\subset \{\pi(2),\ldots,\pi(n)\}$ and therefore $\1_{\{\pi(1)=1\}}+\sum_{j=2}^n\1_{\{\pi(j)\ge 4\}}=n-2$ is even. If $\pi(1)\in \n\setminus\{1,2,3\}$ then $\{\pi(2),\ldots,\pi(n)\}\supseteq \{1,2,3\}$ and consequently we have that $\1_{\{\pi(1)=1\}}+\sum_{j=2}^n\1_{\{\pi(j)\ge 4\}}=n-4$ is even. In the remaining case $\pi(1)\in \{2,3\}$ we have that $\1_{\{\pi(1)=1\}}+\sum_{j=2}^n\1_{\{\pi(j)\ge 4\}}=n-3$ is odd. Hence, by~\eqref{eq:psi shift identity} we have
\begin{equation}\label{eq:n-1 factorial}
\sum_{\pi\in S_n} \sum_{x\in M_n(\F_2)} \Big|\psi\Big(x+\sum_{j=1}^n e_{j\pi(j)}\Big)-\psi(x)\Big|=2^{n^2+1}\left|\big\{\pi\in S_n:\ \pi(1)\in \{2,3\}\big\}\right|=2^{n^2+2}(n-1)!.
\end{equation}
By contrasting~\eqref{eq:psi left} with~\eqref{eq:n-1 factorial} we see that if~\eqref{eq:failed attempt KS} holds true then necessarily $K\ge n/2$.
\end{remark}

\section{Uniform and coarse nonembeddability}\label{sec:uniform} A metric space $(X,d_X)$ is said to admit a uniform embedding into a Banach space $(Z,\|\cdot\|_Z)$ if there exists an injective mapping $f:X\to Z$ and nondecreasing functions $\alpha,\beta:(0,\infty)\to (0,\infty)$ with $\lim_{t\to 0}\beta(t)=0$ such that $\alpha(d_X(a,b))\le \|f(a)-f(b)\|_Z\le\beta (d_X(a,b))$ for all distinct $a,b\in X$. Similarly, $(X,d_X)$ is said to admit a coarse embedding into a Banach space $(Z,\|\cdot\|_Z)$ if there exists an injective mapping $f:X\to Z$ and nondecreasing functions $\alpha,\beta:(0,\infty)\to (0,\infty)$ with $\lim_{t\to\infty}\alpha(t)=\infty$ for which $\alpha(d_X(a,b))\le \|f(a)-f(b)\|_Z\le\beta (d_X(a,b))$ for all distinct $a,b\in X$.

The space $\ell_2(\ell_1)$ does not admit a uniform or coarse embedding into $L_1$. Indeed, by~\cite{AMM85} in the case of uniform embeddings and by~\cite{Ran06} in the case of coarse embeddings, this would imply that  $\ell_2(\ell_1)$ is linearly isomorphic to a subspace of $L_0$, which is proved to be impossible in~\cite[Theorem~4.2]{Kal85}.

Theorem~\ref{thm:main L1 ineq} yields a new proof that $\ell_2(\ell_1)$ does not admit a uniform or coarse embedding into $L_1$. Indeed, suppose that $\alpha,\beta:(0,\infty)\to (0,\infty)$ are nondecreasing and $f:\ell_2(\ell_1)\to L_1$ satisfies
\begin{equation}\label{eq:alpha beta assumption}
\forall\, x,y\in \ell_2(\ell_1),\qquad \alpha\left(\|f(x)-f(y)\|_{\ell_2(\ell_1)}\right)\le \|f(x)-f(y)\|_1\le \beta\left(\|f(x)-f(y)\|_{\ell_2(\ell_1)}\right).
\end{equation}
For every $s\in (0,\infty)$ and $n\in 2\N$, apply Theorem~\ref{thm:main L1 ineq} to the mapping $f_s:M_n(\F_2)\to L_1$ given by $f_s(x)=f(sx)$. The resulting inequality, when combined with~\eqref{eq:alpha beta assumption}, implies that $\alpha(sn)\lesssim \beta(s\sqrt{n})$. Choosing $s=1/\sqrt{n}$ shows that $\alpha(\sqrt{n})\lesssim\beta (1)$, so $f$ is not a coarse embedding, and choosing $s=1/n$ shows that $\beta(1/\sqrt{n})\gtrsim \alpha(1)>0$, so $f$ is not a uniform embedding.

Observe that since, by~\cite{Kad58}, $L_p$ is isometric to a subset of $L_1$ when $p\in [1,2]$, the above discussion implies that  $\ell_2(\ell_1)$ does not admit a uniform or coarse embedding into $L_p$ for every $p\in [1,2]$. Passing now to an examination of the uniform and coarse embeddability of $\ell_2(\ell_1)$ into $L_p$ for $p>2$, observe first that since, by~\cite{AMM85} and~\cite{JR06}, when $p>2$ there is no uniform or coarse embedding of $L_p$ into $L_1$, the fact that  $\ell_2(\ell_1)$ does not admit a uniform or coarse embedding into $L_1$ does not imply that $\ell_2(\ell_1)$ fails to admit such an embedding into $L_p$. An inspection of the above argument reveals that in order to show that $\ell_2(\ell_1)$ does not admit a uniform or coarse embedding into $L_p$ it would suffice to establish the following variant of Theorem~\ref{thm:p version for snowflake} when $p>2$: there exits $C_p,\theta_p\in (0,\infty)$ such that  for every $n\in 2\N$, every $f:M_n(\F_2)\to L_p$ satisfies
\begin{equation}\label{eq:no p>2}
\frac{1}{n}\sum_{j=1}^n\sum_{x\in M_n(\F_2)}\Big\|f\Big(x+\sum_{k=1}^n e_{jk}\Big)-f(x)\Big\|_p^{\theta_p}
\le \frac{C_p}{n^n}\sum_{k\in \n^n} \sum_{x\in M_n(\F_2)}
\Big\|f\Big(x+\sum_{j=1}^n e_{jk_j}\Big)-f(x)\Big\|_p^{\theta_p}.
\end{equation}
However, no such extension of Theorem~\ref{thm:p version for snowflake} to the range $p>2$ is possible. Indeed, since $\ell_2$ is linearly isometric to a subspace of $L_p$, we may fix a linear isometry $U:\ell_p^n(\ell_2^n)\to L_p$. Define $f:M_n(\F_2)\to L_p$ by $f(x)=\sum_{j=1}^n\sum_{k=1}^n x_{jk} U(e_{jk})$. For this choice of $f$, \eqref{eq:no p>2} becomes
\begin{equation*}
2^{n^2}n^{\frac{\theta_p}{2}}=\frac{1}{n}\sum_{j=1}^n\sum_{x\in M_n(\F_2)}\Big\|\sum_{k=1}^n e_{jk}\Big\|_{\ell_p^n(\ell_2^n)}^{\theta_p} \stackrel{\eqref{eq:no p>2}}{\le}
\frac{C_p}{n^n}\sum_{k\in \n^n} \sum_{x\in M_n(\F_2)}
\Big\|\sum_{j=1}^n e_{jk_j}\Big\|_{\ell_p^n(\ell_2^n)}^{\theta_p}
= 2^{n^2}C_p n^{\frac{\theta_p}{p}},
\end{equation*}
which is a contradiction for large enough $n\in 2\N$, because $p>2$. Thus, it was crucial to assume in Theorem~\ref{thm:p version for snowflake}  that $p\le 2$. When $p\ge 4$, this is  accentuated by the following proposition.

\begin{proposition}\label{prop:p>4}
For every $p\ge 4$ the exists a mapping $F_p:\ell_2(\ell_1)\to L_p$ that satisfies
\begin{equation}\label{eq:Fp requirement}
\forall\, x,y\in \ell_2(\ell_1),\qquad \|F_p(x)-F_p(y)\|_p=\|x-y\|_{\ell_2(\ell_1)}^{\frac{2}{p}}.
\end{equation}
Thus $\ell_2(\ell_1)$ admits an embedding into $L_p$ that is both uniform and coarse.
\end{proposition}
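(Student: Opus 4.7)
The plan is to build $F_p$ as a disjoint-support assembly of a single snowflake embedding of $\ell_1$ into $L_p$. Since the identity~\eqref{eq:Fp requirement} is equivalent to $\|F_p(x)-F_p(y)\|_p^p=\sum_{i=1}^\infty \|x_i-y_i\|_1^2$, the $p$-th powers of the $L_p$-norms must add across the $\ell_2(\ell_1)$-coordinates. This additivity is automatic whenever the different coordinates live on pairwise disjoint measurable pieces of the ambient $L_p$ space, so it suffices to produce a single map $\psi:\ell_1\to L_p$ satisfying $\|\psi(u)-\psi(v)\|_p^p=\|u-v\|_1^2$ for every $u,v\in \ell_1$, and then to set $F_p(x)|_{\Omega_i}\eqdef \psi(x_i)$ on a sequence $\{\Omega_i\}_{i=1}^\infty$ of pairwise disjoint measurable copies of the underlying probability space inside a single $L_p$-space.

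To construct $\psi$ I combine Schoenberg's classical $L_2$-embedding of $\ell_1$ with a Bernstein-function snowflaking step and a standard Gaussianization. By Schoenberg's theorem, used elsewhere in this paper (see the proof of Theorem~\ref{thm:p version for snowflake}), there exists $\phi:\ell_1\to L_2$ with $\|\phi(u)-\phi(v)\|_2^2=\|u-v\|_1$; equivalently $\|u-v\|_1$ is a conditionally negative definite kernel on $\ell_1\times \ell_1$. Because $t\mapsto t^\beta$ is a Bernstein function for every $\beta\in(0,1]$, conditional negative definiteness is preserved under this composition, so $\|u-v\|_1^\beta$ is conditionally negative definite for all $\beta\in(0,1]$. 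Applying this with $\beta=4/p\in (0,1]$ — and here is the only place where the hypothesis $p\ge 4$ is used — Schoenberg's theorem again yields $\phi_p:\ell_1\to L_2$ satisfying $\|\phi_p(u)-\phi_p(v)\|_2=\|u-v\|_1^{2/p}$. Now let $(g_k)_{k\in \N}$ be i.i.d.\ standard Gaussians on a probability space $(\Omega,\mathbb{P})$, write $\phi_p(u)=(\phi_p(u)_k)_k\in \ell_2$, and set $G_p(u)\eqdef \sum_k \phi_p(u)_k\, g_k\in L_p(\Omega)$. Then $G_p(u)-G_p(v)$ is a centered scalar Gaussian of variance $\|\phi_p(u)-\phi_p(v)\|_2^2=\|u-v\|_1^{4/p}$, whence $\|G_p(u)-G_p(v)\|_p=c_p\|u-v\|_1^{2/p}$ with $c_p\eqdef (\E|g_1|^p)^{1/p}$. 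Defining $\psi\eqdef c_p^{-1}G_p$ gives $\|\psi(u)-\psi(v)\|_p^p=\|u-v\|_1^2$, and assembling disjoint copies of $\psi$ on the pieces $\Omega_i$ yields $\|F_p(x)-F_p(y)\|_p^p=\sum_i \|x_i-y_i\|_1^2=\|x-y\|_{\ell_2(\ell_1)}^2$, which is~\eqref{eq:Fp requirement}.

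There is no substantive obstacle: the argument is a clean composition of three classical ingredients (Schoenberg's $L_2$-embedding of the $1/2$-snowflake of $\ell_1$, Bernstein functions preserving conditional negative definiteness, and Gaussianization of any $L_2$-embedding into $L_p$) plus the disjoint-support trick that converts the outer $\ell_2$-sum into a $p$-th power of an $L_p$-norm. The role of the assumption $p\ge 4$ is strictly confined to ensuring $4/p\le 1$ in the snowflaking step; for $p\in (2,4)$ the same route would require $\|u-v\|_1^\beta$ to be conditionally negative definite for some $\beta>1$, which is impossible by Schoenberg's characterization (since $\|u-v\|_1^2$ being conditionally negative definite would force $\ell_1$ to embed isometrically into Hilbert space), so a different construction would be needed in that intermediate range.
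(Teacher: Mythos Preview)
Your proof is correct and follows essentially the same route as the paper. The paper also builds a coordinate-wise map $\ell_1\to L_p$ with $\|\psi(u)-\psi(v)\|_p=\|u-v\|_1^{2/p}$ and then assembles the coordinates inside $\ell_p(L_p)\cong L_p$; the only cosmetic differences are that the paper factors the $2/p$-snowflake as $\ell_1\xrightarrow{\sqrt{\cdot}} L_2\xrightarrow{(\cdot)^{4/p}} L_2\xrightarrow{\cong} L_p$ (two separate invocations of Schoenberg followed by an abstract isometric embedding $L_2\hookrightarrow L_p$), whereas you collapse the two Schoenberg steps into one via the Bernstein-function observation and realize the final $L_2\hookrightarrow L_p$ step explicitly by Gaussianization.
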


\begin{proof} Fix $T:\ell_1\to L_2$ such that
\begin{equation}\label{eq:T negative type}
\forall\, x,y\in \ell_1,\qquad \|T(x)-T(y)\|_2=\sqrt{\|x-y\|_1}.
\end{equation}
See~\cite{DL97}  for the existence of such $T$ (an explicit formula for $T$ appear in~\cite[Section~3]{Nao10}). By a classical theorem of Schoenberg~\cite{Sch38}, since $4/p\le 1$ there exists a mapping $\sigma_p:L_2\to L_2$ that satisfies
\begin{equation}\label{eq:4/p}
\forall\, x,y\in L_2\qquad \|\sigma_p(x)-\sigma_p(y)\|_2=\|x-y\|_2^{\frac{4}{p}}.
\end{equation}
Fix also an isometric embedding $S:L_2\to L_p$ and define $F_p:\ell_2(\ell_1)\to \ell_p(L_p)\cong L_p$ by
$$
\forall\, x\in \ell_2(\ell_1),\qquad F_p(x)\eqdef \left(S\circ \sigma_p\circ T(x_j)\right)_{j=1}^\infty.
$$
Then, for every $x,y\in \ell_2(\ell_1)$ we have
\begin{multline*}
\|F_p(x)-F_p(y)\|_{\ell_p(L_p)} =\Big(\sum_{j=1}^\infty \left\|S(\sigma_p(T(x_j)))-S(\sigma_p(T(y_j)))\right\|_p^p\Big)^{\frac{1}{p}}\\=
\Big(\sum_{j=1}^\infty \left\|\sigma_p(T(x_j))-\sigma_p(T(y_j))\right\|_2^p\Big)^{\frac{1}{p}}\stackrel{\eqref{eq:4/p}}{=}\Big(\sum_{j=1}^\infty \left\|T(x_j)-T(y_j)\right\|_2^4\Big)^{\frac{1}{p}}\stackrel{\eqref{eq:T negative type}}{=}\Big(\sum_{j=1}^\infty \left\|x_j-y_j\right\|_1^2\Big)^{\frac{1}{p}},
\end{multline*}
which is precisely the desired requirement~\eqref{eq:Fp requirement}.
\end{proof}

The above proof of Proposition~\ref{prop:p>4} used the fact that $p\ge 4$ in order for~\eqref{eq:4/p} to hold true. It is therefore natural to ask Question~\ref{Q:2<p<4} below. Analogous questions could be asked for uniform and coarse embeddings of $\ell_{p_1}(\ell_{p_2})$ into $L_{p_3}$ (or even into $\ell_{p_3}(L_{p_4})$), and various partial results could be obtained using similar arguments (at times with the embedding in~\eqref{eq:4/p} replaced by the embedding of~\cite[Remark~5.10]{MN04}). We shall not pursue this direction here because it yields incomplete results.

\begin{question}\label{Q:2<p<4}
Suppose that $2<p<4$. Does $\ell_2(\ell_1)$ admit a uniform or coarse embedding into $L_p$?
\end{question}

\begin{remark}\label{rem:Schatten difficulties}
In the Introduction we asked whether or not the Schatten trace class $S_1$ is a KS metric space. The approach of Section~\ref{sec:main proof} seems inherently insufficient to address this question. Indeed, we treated $L_1$ by relating its metric to Hilbert space through the isometric embedding of $(L_1,\sqrt{\|x-y\|_1})$, while $S_1$ is not even uniformly homeomorphic to a subset of Hilbert space (this follows from~\cite{AMM85} combined with e.g.~\cite{Kal85} and the classical linear nonembeddability result of~\cite{McC67}). For this reason we believe that asking about the validity of~\eqref{eq:modified metric KS} in $S_1$ is worthwhile beyond its intrinsic interest, as a potential step towards addressing more general situations in which one cannot reduce the question to (nonlinear) Hilbertian considerations.
\end{remark}

\section{Embeddings}\label{sec:embeddings}

In this section we shall justify the remaining (simple) embedding statements that were given without proof in the Introduction, starting with the proof of Lemma~\ref{lem:use aln}.

\begin{proof}[Proof of Lemma~\ref{lem:use aln}] Recall that we are given $n\in \N$, finite subsets $X_1,\ldots,X_n\subset \ell_1$, and we denote
$S=X_1\times\ldots\times X_n\subset \ell_2^n(\ell_1)$. Thus $|S|=\prod_{j=1}^n|X_i|$. We may assume without loss of generality that $|X_j|>1$ for all $j\in \n$.
Write
\begin{equation}\label{eq:def J}
J\eqdef \left\{j\in \n:\ |X_j|> \exp\left(\frac{\sqrt{\log |S|}}{\log\log |S|}\right)\right\}.
\end{equation}
Then
\begin{equation}\label{eq:J upper}
|S|\ge \prod_{j\in J}|X_j| > \exp\left(\frac{|J|\sqrt{\log |S|}}{\log\log |S|}\right)\implies |J|<\sqrt{\log |S|}\log\log |S|.
\end{equation}

By the main result of~\cite{ALN08}, for every $j\in \n$ there exists $f_j:X_j\to \ell_2$ such that
\begin{equation}\label{eq:fj assumption}
\forall\, u,v\in X_j,\qquad \|u-v\|_1\le \|f_j(u)-f_j(v)\|_2\lesssim \sqrt{\log |X_j|}\log\log |X_j|\cdot \|u-v\|_1.
\end{equation}
We shall fix from now on an isometric embedding $T:\ell_2^{\n\setminus J}(\ell_2)\to L_1$.

Define $\phi:S\to (\ell_1^J(\ell_1)\oplus L_1)_1$, where $(\ell_1^J(\ell_1)\oplus L_1)_1$ is the corresponding $\ell_1$-direct sum, by setting
$$
\phi(u)\eqdef \left((u_j)_{j\in J}\right)\oplus T\left((f_j(u_j))_{j\in \n\setminus J}\right).
$$
Then for every $u,v\in S$ we have
\begin{multline}\label{eq:phi in image}
\|\phi(u)-\phi(v)\|_{(\ell_1^J(\ell_1)\oplus L_1)_1}=\sum_{j\in J} \|u_j-v_j\|_1+\bigg(\sum_{j\in \n\setminus J}\left\|f_j(u_j)-f_j(v_j)\right\|_2^2\bigg)^{\frac12}\\\ge
\bigg(\sum_{j\in J} \|u_j-v_j\|_1^2\bigg)^{\frac12}+\bigg(\sum_{j\in \n\setminus J}
\left\|u_j-v_j\right\|_1^2\bigg)^{\frac12}\ge \|u-v\|_{\ell_2^n(\ell_1)},
\end{multline}
where in the first inequality of~\eqref{eq:phi in image} we used the leftmost inequality in~\eqref{eq:fj assumption}. The corresponding upper bound is deduced as follows from Cauchy--Schwarz,
the rightmost inequality in~\eqref{eq:fj assumption}, the definition of $J$ in~\eqref{eq:def J}, and the upper bound on $|J|$ in~\eqref{eq:J upper}.
\begin{align*}
&\|\phi(u)-\phi(v)\|_{(\ell_1^J(\ell_1)\oplus L_1)_1}\\&=\sum_{j\in J} \|u_j-v_j\|_1+\bigg(\sum_{j\in \n\setminus J}\left\|f_j(u_j)-f_j(v_j)\right\|_2^2\bigg)^{\frac12}\\
&\lesssim \sqrt{J|}\bigg(\sum_{j\in J} \|u_j-v_j\|_1^2\bigg)^{\frac12}+\left(\max_{j\in \n\setminus J} \sqrt{\log |X_j|}\log\log |X_j|\right)\bigg(\sum_{j\in \n\setminus J}
\left\|u_j-v_j\right\|_1^2\bigg)^{\frac12}\\
&\le \sqrt[4]{\log |S|}\sqrt{\log\log|S|}\bigg(\sum_{j\in J} \|u_j-v_j\|_1^2\bigg)^{\frac12}+\frac{\sqrt[4]{\log |S|}\log\left(\frac{\sqrt{\log |S|}}
{\log\log |S|}\right)}{\sqrt{\log\log |S|}}
\bigg(\sum_{j\in \n\setminus J}
\left\|u_j-v_j\right\|_1^2\bigg)^{\frac12}\\
&\lesssim \sqrt[4]{\log |S|}\sqrt{\log\log|S|}\cdot \|u-v\|_{\ell_2^n(\ell_1)}.\tag*{\qedhere}
\end{align*}
\end{proof}

We shall next justify the upper bound on $c_1(\ell_q^n\left(\F_2^n,\|\cdot\|_p\right))$ in~\eqref{eq:discrete distortion sharp}. Recall that we are assuming here that $q>p\ge 1$. Since for every $x,y\in M_n(\F_2)$ we have
$$
\|x-y\|_{\ell_q^n(\ell_p^n)}=\bigg(\sum_{j=1}^n\|x_j-y_j\|_p^q\bigg)^{\frac{1}{q}}=\bigg(\sum_{j=1}^n\|x_j-y_j\|_1^{\frac{q}{p}}\bigg)^{\frac{1}{q}},
$$
by H\"older's inequality
\begin{equation}\label{eq:1/p snowflake}
\frac{\|x-y\|_{\ell_1^n(\ell_1^n)}^{\frac{1}{p}}}{n^{\frac{1}{p}-\frac{1}{q}}}\le \|x-y\|_{\ell_q^n(\ell_p^n)}\le \|x-y\|_{\ell_1^n(\ell_1^n)}^{\frac{1}{p}}.
\end{equation}
By a classical theorem of Bretagnolle, Dacunha-Castelle and Krivine~\cite{BDK65} (see also~\cite[Theorem~5.11]{WW75}), for every $\alpha\in (0,1]$ the metric space $(L_1,\|x-y\|_1^\alpha)$ admits an isometric embedding into $L_1$. Hence, the metric space $$\left(M_n(\F_2),\|x-y\|_{\ell_1^n(\ell_1^n)}^{\frac{1}{p}}\right)$$ admits an isometric embedding into $L_1$, and consequently~\eqref{eq:1/p snowflake} implies that
\begin{equation*}
c_1\left(\ell_q^n\left(\F_2^n,\|\cdot\|_p\right)\right)\le n^{\frac{1}{p}-\frac{1}{q}}. \tag*{\qed}
\end{equation*}

\begin{remark}\label{rem:mn proof}
Arguing similarly to the above discussion also justifies the assertion in Remark~\ref{rem:mn intro}. Indeed, suppose that $m,n\in \N$ and $q>p\ge 1$. Then by H\"older's inequality for every $x,y\in \ell_q^m(\F_2^n)$,
\begin{equation}\label{eq:snowflake 1}
\frac{\|x-y\|_{\ell_1^m(\ell_1^n)}^{\frac{1}{p}}}{m^{\frac{1}{p}-\frac{1}{q}}}= \frac{\|x-y\|_{\ell_p^m(\ell_p^n)}}{m^{\frac{1}{p}-\frac{1}{q}}} \le \|x-y\|_{\ell_q^m(\ell_p^n)}\le \|x-y\|_{\ell_p^m(\ell_p^n)}=\|x-y\|_{\ell_1^m(\ell_1^n)}^{\frac{1}{p}}.
\end{equation}
Also, because for every $x,y\in \ell_q^m(\F_2^n)$ and $j\in \n$ we have $\|x_j-y_j\|_1\in \{0,\ldots,n\}$,
\begin{equation}\label{eq:snowflake 2}
\|x-y\|_{\ell_q^m(\ell_p^n)}=\bigg(\sum_{j=1}^n \|x_j-y_j\|_1^{\frac{q}{p}}\bigg)^{\frac{1}{q}}\in \left[\|x-y\|_{\ell_1^m(\ell_1^n)}^{\frac{1}{q}},n^{\frac{1}{p}-\frac{1}{q}}\|x-y\|_{\ell_1^m(\ell_1^n)}^{\frac{1}{q}}\right].
\end{equation}
Since the metric spaces $$\left(M_n(\F_2),\|x-y\|_{\ell_1^n(\ell_1^n)}^{\frac{1}{p}}\right)\qquad \mathrm{and} \qquad \left(M_n(\F_2),\|x-y\|_{\ell_1^n(\ell_1^n)}^{\frac{1}{q}}\right)$$ admit an isometric embedding into $L_1$, it follows from~\eqref{eq:snowflake 1} and~\eqref{eq:snowflake 2} that
\begin{equation}\label{eq:min}
c_1\left(\ell_q^m\left(\F_2^n,\|\cdot\|_p\right)\right)\le \min\left\{m^{\frac{1}{p}-\frac{1}{q}},n^{\frac{1}{p}-\frac{1}{q}}\right\}.
\end{equation}
Since $\ell_q^m\left(\F_2^n,\|\cdot\|_p\right)$ contains an isometric copy of $\ell_q^{\min\{m,n\}}\left(\F_2^{\min\{m,n\}},\|\cdot\|_p\right)$, by~\eqref{eq:discrete distortion sharp} and~\eqref{eq:min},
$$
c_1\left(\ell_q^m\left(\F_2^n,\|\cdot\|_p\right)\right)\asymp c_1\left(\ell_q^{\min\{m,n\}}\left(\F_2^{\min\{m,n\}},\|\cdot\|_p\right)\right),
$$
as required.
\end{remark}

We end with a brief justification of~\eqref{eq:cont dist}. If $1\le p<q$ and $p\le 2$ then $c_1\left(\ell_q^n(\ell_p^n)\right)\gtrsim n^{1/p-1/q}$, as proved by Kwapie{\'n} and Sch\"utt~\cite{KS89}. The reverse inequality follows from the fact that the $\ell_q^n(\ell_p^n)$ norm is $n^{1/p-1/q}$-equivalent to the $\ell_p^n(\ell_p^n)$ norm, and from the fact~\cite{Kad58}  that $\ell_p$ is isometric to a subspace of $L_1$ when $p\le 2$. When $q>p>2$,  the $\ell_p^n$ norm is $n^{1/2-1/q}$-equivalent to the $\ell_2^n$ norm and the $\ell_q^n$ norm is $n^{1/2-1/q}$-equivalent to the $\ell_2^n$ norm. So, the  $\ell_q^n(\ell_p^n)$ norm is $n^{1-1/p-1/q}$-equivalent to the $\ell_2^n(\ell_2^n)$ norm, which embeds isometrically into $L_1$. For the matching lower bound, suppose that $T:\ell_q^n(\ell_p^n)\to L_1$ is an injective linear mapping. Since $L_1$ has cotype $2$ (see e.g.~\cite{Mau03}),
\begin{multline}\label{eq:use rademacher cotype}
\frac{n^2}{\|T^{-1}\|^2}\le \sum_{j=1}^n\sum_{k=1}^n \|Te_{jk}\|_1^2\lesssim \frac{1}{2^{n^2}}\sum_{\e\in \{-1,1\}^{n^2}}\Big\|\sum_{j=1}^n\sum_{k=1}^n \e_{jk}Te_{jk}\Big\|_1^2\\
\le \frac{\|T\|^2}{2^{n^2}}\sum_{\e\in \{-1,1\}^{n^2}}\Big\|\sum_{j=1}^n\sum_{k=1}^n e_{jk}\Big\|_{\ell_q^n(\ell_p^n)}^2=\|T\|^2\cdot  n^{\frac{2}{p}+\frac{2}{q}}.
\end{multline}
By~\eqref{eq:use rademacher cotype} we have $\|T\|\cdot\|T^{-1}\|\gtrsim n^{1-1/p-1/q}$. The fact that $c_1\left(\ell_q^n(\ell_p^n)\right)\gtrsim n^{1-1/p-1/q}$ now follows by a standard differentiation argument; see e.g.~\cite[Chapter~7]{BL00} (alternatively, one could repeat the above argument mutatis mutandis, while using the fact that $L_1$ has metric cotype $2$ directly; see~\cite{MN08}).

\bibliographystyle{abbrv}
\bibliography{pythagorean}

 \end{document}